\documentclass[reqno,11pt]{amsart}

%\usepackage{hyperref}
%\usepackage[PostScript=dvips,dpi=600,nohug,small]{diagrams}
%\usepackage[all]{xy}
%\CompileMatrices
\usepackage{amssymb}
\usepackage{amsgen}
\usepackage{amsmath}
\usepackage{amsthm}
\usepackage{mathrsfs}
\usepackage{cite}
\usepackage{amsfonts}
\usepackage{tikz}

\hyphenation{mon-oid mon-oids}

\newcommand{\qv}{\mathop{\mathrm{qv}}}

\renewcommand{\to}{\longrightarrow}

 % J - relation
 % D - relation
 % R - relation
 % L - relation

%\newcommand{\nup}[1]{{#1}^{\not\hskip 1pt\uparrow}}

\newcommand{\sgn}{\mathop{\mathbf s}\nolimits}

% some macros added by Franco
\usepackage{xcolor}

\def\AA{\mathcal A} % for hyperplane arrangement
 % intersection lattice of hyperplane arrangement
\def\FF{\mathcal F} % face monoid of hyperplane arrangement
 % for ``complex'' lrb {0,+,-,i,j}
 % symbol for oriented matroid

%\@ifundefined{TheoremsNumberedGlobally}{
\newtheorem{Thm}{Theorem}
\newtheorem{Prop}[Thm]{Proposition}

{\theoremstyle{definition}
}
{\theoremstyle{remark}
\newtheorem{Rmk}[Thm]{Remark}}
\newtheorem{Cor}[Thm]{Corollary}
{\theoremstyle{remark}
}

\newtheorem{Problem}[Thm]{Problem}

{\theoremstyle{remark}
}
{\theoremstyle{remark}
}
{\theoremstyle{remark}
}

\numberwithin{equation}{section}

\title{Semigroups embeddable in hyperplane face monoids}

\author{Stuart Margolis}
\address[S.~Margolis]{%
    Department of Mathematics\\
    Bar Ilan University\\
    52900 Ramat Gan\\
    Israel}
\email{margolis@math.biu.ac.il}

\author{Franco Saliola}
\address[F.~~Saliola]{
D{\'e}partement de Math{\'e}matiques -- LaCIM\\
Universit{\'e} du Qu{\'e}bec {\`a} Montr{\'e}al\\
C.P. 8888, Succursale Centre-Ville\\
Montr{\'e}al, Qu{\'e}bec  H3C 3P8\\
Canada
}
\email{saliola.franco@uqam.ca}

\author{Benjamin Steinberg}
\address[B.~Steinberg]{%
    Department of Mathematics\\
    City College of New York\\
    Convent Avenue at 138th Street\\
    New York, New York 10031\\
    USA}
\email{bsteinberg@ccny.cuny.edu}

\thanks{The second was supported in part by NSERC. The third author was supported in part by the Simon's Foundation collaboration grant 245268}
\date{\today}

\keywords{left regular bands, quasivarieties, hyperplane arrangements}
\subjclass[2000]{20M07}
% \maketitle

\begin{document}

\begin{abstract}
The left regular band structure on a hyperplane arrangement and its representation theory provide an important connection between semigroup theory and algebraic combinatorics. A finite semigroup embeds in a real hyperplane face monoid if and only if it is in the quasivariety generated by the monoid obtained by adjoining an identity to the two-element left zero semigroup.  We prove that this quasivariety is on the one hand polynomial time decidable, and on the other minimally non-finitely based.  A similar result is obtained for the semigroups embeddable in complex hyperplane semigroups.
\end{abstract}

\maketitle

\section{Introduction}
A left regular band is a semigroup satisfying the identities $x^2=x$ and $xyx=xy$.  In recent years, there has been a great deal of interest in left regular bands because a number of combinatorial structures have been observed to naturally carry the structure of a left regular band including: real hyperplane arrangements and oriented matroids~\cite{BHR,Brown1,Brown2,Saliolahyperplane,DiaconisBrown1}, complex hyperplane arrangments~\cite{bjorner2}, matroids~\cite{Brown1}, interval greedoids~\cite{bjorner2} and oriented interval greedoids~\cite{SaliolaThomas};   see also~\cite{GrahamChung}.  Further applications of left regular bands and their representation theory to algebraic combinatorics and probability theory can be found in~\cite{Aguiar,aguiarspecies,Brown1,Brown2,Saliolahyperplane,Saliola,lrbglobal}.  Left regular bands also appear in the work of Lawvere on topos theory~\cite{Lawvere,graphic1,moregraphic}. Thus left regular bands have proven to be one of the most important classes of semigroups from the point of view of applications to other areas of mathematics.

It turns out that many of the left regular bands associated to combinatorial structures are subsemigroups of direct powers of the monoid $L=\{0,+,-\}$ consisting of an identity $0$ and two left zeroes $\{+,-\}$. We will explain the use of the symbols $\{+,-\}$ for the left zeroes in the next section. For example,  $\{0,+,-\}^n$ is the face monoid~\cite{Brown1,Saliolahyperplane} of the hyperplane arrangement consisting of the coordinate hyperplanes in $\mathbb R^n$.  In this paper we characterize those semigroups embeddable in direct powers of $L$.

It is well known, cf.~\cite[Proposition~7.3.2]{qtheor}, that the variety of left regular bands is generated by $L$. The only proper subvarieties of left regular bands are the trivial variety, the variety of left zero semigroups, the variety of semilattices and the  variety of left normal bands~\cite{Bir, Fenn}. It was thus a huge surprise when Mark Sapir exhibited uncountably many quasivarieties of left regular bands~\cite{Sapirquasi}.   Moreover, he produced a countable anti-chain in the lattice of left regular band quasivarieties, each of whose members is generated by a finite left regular band~\cite{Sapirquasi}.

It is then entirely natural from the point of view of semigroup theory to ask what is the quasivariety generated by $L$ and whether it is finitely based.
Recall that a \emph{quasi-identity} is a universally quantified formal implication of the form \[(u_1=v_1)\wedge \cdots \wedge (u_n=v_n)\implies u=v\]  where $u_i,v_i,u,v$ are words over some alphabet, for $1\leq i\leq n$.  For example, the left cancellation law is the quasi-identity $xy=xz\implies y=z$.  There is an obvious notion of what it means for a semigroup to satisfy a quasi-identity. For instance, a semigroup is left cancellative if and only if it satisfies the left cancellation law.

A class of semigroups defined by quasi-identities is called a \emph{quasivariety}~\cite{universalalgebra}.  Quasivarieties alternatively can be described as classes of semigroups closed under isomorphism, subsemigroups, direct products and ultraproducts~\cite{universalalgebra}.  If $S$ is a finite semigroup, then the quasivariety $\qv(S)$ generated by $S$ consists of all semigroups embeddable in a direct power of $S$, or equivalently of all semigroups whose homomorphisms to $S$ separate points.  In particular, if $T$ is a finite semigroup we have the following  exponential time algorithm to determine if $T$ belongs to $\qv(S)$: compute all $|S|^{|T|}$ maps from $T$ to $S$; check which are homomorphisms and check if they suffice to separate points. It is well known that this can be turned into a non-deterministic polynomial time ($\mathsf{NP}$) algorithm to decide membership in $\qv(S)$~\cite{Bergman}. If $\qv(S)$ has a finite basis of quasi-identities, then clearly membership in $\qv(S)$ for finite semigroups can be decided in polynomial time.

It is known that $\qv(L)$ contains finitely many proper subquasivarieties, all of which are finitely based~\cite{Gerhard}.
Let $ZL$ be the result of adjoining a multiplicative zero element to $L$.  Our main result is the following.

\begin{Thm}\label{main}
The quasivarieties $\qv(L)$ and $\qv(ZL)$ are not finitely based, but have polynomial time membership algorithms.
\end{Thm}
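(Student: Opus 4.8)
I would first isolate the standard reduction that converts a finite-basis question into a statement about small subsemigroups: for a finite algebra $A$, a semigroup $S$ satisfies every quasi-identity of $A$ in at most $k$ variables if and only if every subsemigroup of $S$ generated by at most $k$ elements lies in $\qv(A)$. The direction I need---if all $\le k$-generated subsemigroups lie in $\qv(A)$ then $S$ satisfies every $\le k$-variable quasi-identity of $A$---is immediate from the point-separation description of $\qv(A)$: a failure of such a quasi-identity is witnessed by $k$ elements generating some $S'\le S$, and since $S'\in\qv(A)$ a homomorphism $S'\to A$ realizes the very same failure inside $A$, contradicting the validity of the quasi-identity. Granting this lemma, to prove that $\qv(L)$ is not finitely based it suffices to produce, for every $n$, a finite left regular band $S_n$ that is $n$-generated, satisfies $S_n\notin\qv(L)$, yet all of whose subsemigroups generated by fewer than $n$ elements belong to $\qv(L)$: a basis in $\le k$ variables would then be satisfied by $S_{k+1}$, forcing the contradiction $S_{k+1}\in\qv(L)$. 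As $\qv(L)$ has only finitely many proper subquasivarieties, all finitely based \cite{Gerhard}, this simultaneously shows that $\qv(L)$ is \emph{minimally} non-finitely based. Since both the reduction lemma and the separation description hold for any finite semigroup, an analogous critical family (for instance obtained by adjoining a zero to $S_n$) handles $ZL$.

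\textbf{The homomorphism description and the polynomial algorithm.} Because the variety generated by $L$ is all of left regular bands, every member of $\qv(L)$ is a left regular band, so I may use the maximal semilattice image $\sigma\colon S\to\Lambda$, whose fibres are left zero semigroups and which satisfies $\sigma(st)=\sigma(s)\wedge\sigma(t)$. A homomorphism $\varphi\colon S\to L$ is then exactly the following data: (i) a semilattice character, which amounts to choosing $p\in\Lambda$ and sending $x\mapsto 0$ when $x\ge p$ and $x\mapsto *$ otherwise (in a finite semilattice every filter is principal, and each such map into the two-element semilattice $\{0,*\}$ is a character, giving at most $|\Lambda|$ of them, which already separate the points of $\Lambda$); together with (ii) a sign map $\varepsilon\colon\sigma^{-1}(I)\to\{+,-\}$ on the complementary ideal $I=\{x:x\not\ge p\}$. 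Expanding $\varphi(st)=\varphi(s)\varphi(t)$ shows that the only conditions on $\varepsilon$ are the \emph{equalities} $\varepsilon(st)=\varepsilon(s)$ when $\sigma(s)\in I$ and $\varepsilon(st)=\varepsilon(t)$ when $\sigma(s)\ge p$ and $\sigma(t)\in I$. As every condition is an equality, any $\varepsilon$ constant on the equivalence relation $\approx_p$ they generate extends to a homomorphism; hence two elements $a,b$ with $\sigma(a)=\sigma(b)=\lambda$ are separated into $L$ if and only if there is some $p$ with $\lambda\not\ge p$ and $a\not\approx_p b$, while elements of distinct support are always separated. Since there are at most $|\Lambda|$ choices of $p$ and each $\approx_p$ is computed by a union--find over polynomially many generating pairs, one decides separability of every pair in polynomial time and concludes $S\in\qv(L)$ iff all pairs are separable. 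For $ZL$ the target has the three layers $0$, $\{+,-\}$, $z$, so a homomorphism into $ZL$ is a chain of two nested principal filters together with a sign map on the middle layer; there are $O(|\Lambda|^2)$ such chains and the same union--find yields a polynomial algorithm.

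\textbf{The critical family and the main obstacle.} It remains to build the sequence $S_n$, and this is where I expect the real work to lie. The plan is to place the obstructing pair $a,b$ in the bottom fibre and to arrange the other fibres around an $n$-cycle of generators, so that for \emph{every} admissible filter---in particular for $\uparrow\hat1=\{\hat1\}$, whose ideal is all of $\Lambda$ except the identity support---the pair $a,b$ becomes $\approx_p$-connected only through a chain of the equality constraints that traverses all $n$ generators; this forces $S_n\notin\qv(L)$. The delicate half is the converse: deleting any single generator must break the cycle, so that in each subsemigroup generated by fewer than $n$ elements some filter $\uparrow p$ disconnects the surviving copy of the obstruction and thereby places that subsemigroup in $\qv(L)$. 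Verifying this global-versus-local dichotomy---that the sign-connectivity obstruction genuinely requires all $n$ generators while every proper piece is realizable in a power of $L$---is the crux of the argument; the $ZL$ statements then follow by adjoining a zero and repeating the two-cut analysis.
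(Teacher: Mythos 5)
Your proposal correctly sets up the standard reduction (a quasivariety definable by quasi-identities in at most $k$ variables is determined by its $k$-generated subsemigroups, so non-finite-basedness follows from a family of ``critical'' semigroups outside the quasivariety whose small subsemigroups all lie inside it), and this is exactly how the paper's Section 5 begins. But the substantive half of the theorem is the construction and verification of that critical family, and here you offer only a plan --- ``place the obstructing pair in the bottom fibre and arrange the other fibres around an $n$-cycle of generators'' --- while explicitly deferring the verification, which you yourself identify as the crux. That is the genuine gap: without it nothing is proved. For comparison, the paper builds $B_n$ from the face monoid $F_n$ of $n$ lines through the origin of $\mathbb R^2$ by deleting the origin and the $\mathscr L$-class of the $x$-axis and then identifying the two chambers that were separated only by the $x$-axis. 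The Hasse diagram of $B_n$ is then a single zig-zag path from $C_1$ to $C_{2n}$ and $xC_1=xC_{2n}$ for all $x\in B_n$, so $B_n$ violates (CC$'$) and hence $B_n\notin\qv(ZL)$; on the other hand, identifying $C_1$ with $C_{2n}$ recovers $F_{n-1}$ minus its origin, so every pair except $\{C_1,C_{2n}\}$ is separated by a homomorphism to a hyperplane face monoid, while in any \emph{proper} subsemigroup containing both $C_1$ and $C_{2n}$ these two elements lie in distinct connected components and are separated by the left-zero quotient. Note also the economy of the paper's design, stated as Theorem~\ref{nofinbasis}: a \emph{single} family with $B_n\notin\qv(ZL)$ but all proper subsemigroups in $\qv(L)$ disposes of every quasivariety $Q$ with $\qv(L)\subseteq Q\subseteq\qv(ZL)$ at once. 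Your suggestion to handle $ZL$ separately ``by adjoining a zero'' points in the wrong direction: since $\qv(L)\subseteq\qv(ZL)$, what is needed is a family failing the \emph{larger} quasivariety whose proper subsemigroups lie in the \emph{smaller} one, not a second family.

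Your polynomial-time argument does take a genuinely different route from the paper: the paper proves by induction on $|S|$ that membership in $\qv(L)$ (resp.\ $\qv(ZL)$) is equivalent to the polynomially checkable condition (CC) (resp.\ (CC$'$)) of Theorem~\ref{firstmain}, whereas you parameterize all homomorphisms $S\to L$ by a filter of the support semilattice $\Lambda$ together with a sign map constant on the classes of an equivalence generated by forced equalities, and then test point-separation. This route is viable, but as stated it contains an error: a semilattice character $\Lambda\to\{0,*\}$ may have \emph{empty} kernel filter (the constant character onto $*$), so homomorphisms $S\to L$ whose image lies in the left-zero part $\{+,-\}$ are missed by ``choose $p\in\Lambda$''. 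These are exactly the connected-component quotients of Proposition~\ref{leftzeroimage}, and they are indispensable: for the two-element left zero semigroup $\{+,-\}\subseteq L$ itself, $\Lambda$ is a single point $\lambda$ and no $p\in\Lambda$ satisfies $\lambda\not\geq p$, so your criterion declares its two elements inseparable and hence places a subsemigroup of $L$ outside $\qv(L)$. The fix is easy --- allow the empty filter, whose constraint relation is precisely the connected-component relation --- and with that correction (and the analogous correction for the two nested filters in the $ZL$ case) your enumeration is complete and the algorithm is polynomial; but as written the separation criterion is false.
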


More precise descriptions of these quasivarieties will be given in the body of the article.

\begin{Rmk}
As noted above, the membership problem for the quasivariety generated by a finite semigroup $S$ (or more generally a finite universal algebra) is in the complexity class $\mathsf{NP}$. See~\cite{Bergman} for example. Jackson and McKenzie~\cite{Jackson} showed, among other related results, that there is a finite semigroup $S$ such that the membership problem for $\qv(S)$ is $\mathsf{NP}$-complete. Thus Theorem~\ref{main} shows that having a finite basis of quasi-identities is not the only way that a finite semigroup can have a polynomial time algorithm for membership in the quasivariety it generates. This is not the first example of such a phenomenon, but we emphasize that it does show that the language of quasi-identities is not powerful enough to capture finitely generated quasivarieties with polynomial time membership. The search for such a formalism is a worthy problem.
\end{Rmk}

\section{Historical background}\label{history}

The finite basis problems for the identity theory and quasi-identity theory of varieties and quasivarieties of semigroups, and more generally for universal algebras, have a long and extensive history. In this section we provide background on this work, mainly for semigroups, to put the results of the current paper in context.

Lyndon~\cite{Lyndon} proved that every universal algebra on a two-element set has a finite basis for its identities. Shafaat~\cite{Shafaat} proved that the lattice of quasivarieties of any universal algebra on a two-element set is a two-element chain. It follows that the quasivariety generated by a two-element universal algebra is a variety and thus has a finite basis for its quasi-identities.

For three-element universal algebras and three-element semigroups the situation is radically different. Gorbunov~\cite{Gorbunov} constructed a universal algebra with two unary operations possessing no independent basis of quasi-identities. Sapir~\cite{Sapir3} proved that the three-element monogenic aperiodic semigroup $C_{3,1}=\{x,x^{2}, x^{3}=x^{4}\}$ has no independent basis of quasi-identities. Thus the main result of this paper gives another example of a three-element semigroup with no finite basis of identities. The question of whether $L$ has an independent basis of identities is still open. See the last section of the paper on open problems.

Important classes of semigroups have also been studied with respect to the question of finite bases of identities and quasi-identities. Oates and Powell~\cite{OP}  proved that every finite group has a finite basis for its identities. Olshanskii~\cite{Olsh} proved that a finite group $G$ has a finite basis of quasi-identities if and only if every nilpotent subgroup of $G$ is abelian. Sapir~\cite{Sapir3} extended this latter result to the class of completely simple semigroups. A completely simple semigroup $S$ has a finite basis of quasi-identities if and only if $S$ is a rectangular group, which by definition is the direct product of a group and a rectangular band (equivalently $S$ has a Rees matrix representation in which the structure matrix has all entries the identity element of its maximal subgroup), such that every nilpotent subgroup of $S$ is abelian.

The most general results about finite semigroups with respect to finite bases of quasi-identities appeared in~\cite{JV}. We recall some definitions. The monogenic semigroup with index $i$ and period $p$ is $C_{i,p}$, the semigroup with presentation $\langle x\mid x^i =x^{i+p}\rangle$. The index of a finite semigroup is the maximum index of any of its monogenic semigroups. A semigroup $S$ is \emph{proper $3$-nilpotent} if it has a zero $0$, is not a semigroup with all products equal to $0$ and $S^3=0$. Jackson and Volkov~\cite{JV} prove that if $S$ is any finite semigroup that generates a quasivariety that contains either a proper 3-nilpotent semigroup or has index at least $3$, then $S$ has no finite basis for its quasi-identities. This includes Sapir's results~\cite{Sapir3} about the aperiodic monogenic semigroup of order (and index) $3$. The main proof techniques of~\cite{JV} reduce the general case to this one.

It is still a major open problem as to whether it is decidable if a finite semigroup has a finite basis of either its identities or quasi-identities. For the identity theory of finite semigroups, the notion of inherently non-finitely based (INFB) semigroups, has proved extremely useful. A finite semigroup $S$ is INFB if every locally finite variety $V$ containing $S$ is not finitely based. A well known example of an INFB is the six element semigroup consisting of all the $2 \times 2$ matrix units, the identity matrix and the zero matrix. While there has been much progress on the finite basis problem for finite semigroups, the general problem remains open. See~\cite{Volk} for an extensive survey of this area. We remark that for universal algebras, the problem is undecidable~\cite{McKen}.

For quasivarieties, there is the analogous notion of inherently non-finitely quasi-identity based (INFQB). There are examples of INFQB universal algebras~\cite{LawWill}, but no finite semigroup is INFQB~\cite{MargSap}. A related property is that of being strongly non-finitely quasi-identity based (SNFQB). A universal algebra $A$ is SNFQB if whenever $A$ is contained in a quasivariety $Q$ generated by a finite number of finite algebras, then $Q$ is not finitely based. Thus the results of~\cite{Sapir3} and~\cite{JV} say that the monogenic semigroup $C_{i,p}$ is SNFQB as long as $i>2$ as well as any proper 3-nilpotent semigroup. The results of~\cite{Olsh} and~\cite{Sapir3} imply that a completely simple semigroup $S$ is SNFQB if and only if $S$ either contains a non-abelian nilpotent subgroup or the idempotents of $S$ are not a subsemigroup. The paper~\cite{JV} discusses a general framework in which the properties INFQB and SNFQB are special cases.

Thus the question of whether it is decidable if a finite semigroup has a finite basis of quasi-identities has been reduced to the case that the semigroup has index at most $2$. Finite semigroups of index $1$ are exactly the class of completely regular semigroups, that is, semigroups that are unions of their subgroups. Aperiodic completely regular semigroups are exactly bands. Gerhard and Shafaat~\cite{Gerhard} proved that any normal band has a finite basis of quasi-identities. Recall that a band is normal if it satisfies the identity $xyzw=xzyw$. It is well known that a band $S$ is normal if and only if it is locally a semilattice, that is, $eSe$ is a semilattice for every $e \in S$. This in turn is equivalent to the condition that neither the left regular band $L$ nor its dual $R$, consisting of two right zeroes and an identity, is a subsemigroup of $S$.

Thus the main result Theorem~\ref{main} of this paper shows that $L$ and the dual semigroup $R$ are the smallest bands generating a non-finitely based quasivariety. Therefore, the varieties of left regular bands and right regular bands are the smallest varieties of bands that contain a non-finitely based quasivariety. It is an open question whether $L$ and $R$ are SNFQB within the class of bands or all finite semigroups. On the other hand, in an unpublished part of his thesis~\cite{SapirThesis}, Mark Sapir proved that the variety \mbox{$[\![x^{2}=x, xyz = xyzxz]\!]$} contains finite semigroups $S$ that are SNFQB. This latter variety of bands contains, but is strictly bigger than, the variety of left regular bands. As far as we know, prior to this work there were no examples of a left regular band without a finite basis. It is reasonable to ask if $L$ is SNFQB with respect to left regular bands,  bands, or all finite semigroups. Neither the techniques of Sapir's thesis~\cite{SapirThesis} nor the techniques of this paper shed light on this problem as far as we see. See the last section of this paper for a further discussion of this problem.

\section{Hyperplane arrangements and their face monoids}

We gather here the basic definitions, properties and examples of hyperplane arrangements and hyperplane face monoids. See~\cite{BHR,Brown1,Brown2,Saliolahyperplane,DiaconisBrown1} for further details and examples.

A \emph{hyperplane arrangement} $\AA$ in $\mathbb R^d$ is a finite set of
hyperplanes in $\mathbb R^d$. We restrict our attention to \emph{central}
hyperplane arrangements, that is arrangements where all the hyperplanes contain $0 \in \mathbb
R^d$. Each hyperplane $H \in \AA$ determines two open half-spaces of $\mathbb
R^d$ denoted $H^+$ and $H^-$. The choice of which half-space to label $+$
or $-$ is arbitrary, but fixed.

A \emph{face} of $\AA$ is a nonempty intersection of the form
\begin{gather*}
F = \bigcap_{H\in \AA} H^{\sigma_H},
\end{gather*}
where $\sigma_H \in \{+,-,0\}$ and $H^0 = H$. The sequence $\sigma(F) =
(\sigma_H)_{H\in\AA}$ is the \emph{sign sequence} of $F$. A \emph{chamber}
$C$ is a face such that $\sigma_H(C) \neq 0$ for all $H \in \AA$.

The next figure contains an example of a hyperplane arrangement with its sign sequence.  The hyperplanes are given by the $x$-axis and the lines at angles $2\pi/3$ and $4\pi/3$ with respect to the $x$-axis.

\begin{figure}[htb]
\centering
\begin{tikzpicture}
    %\draw[step=0.25cm,color=gray] (-3,-2) grid (3,2);
    \draw[line width=1.5pt] (-2,-2) -- (2,2);
    \draw[line width=1.5pt] (2,-2) -- (-2,2);
    \draw[line width=1.5pt] (-3,0) -- (3,0);
    \draw (0,0)       node[fill=white,font=\tiny] {$(000)$};
    \draw (0,-1.5)    node[font=\small]           {$(-+-)$};
    \draw (-1.4,-1.4)       node[fill=white,font=\tiny] {$(-+0)$};
    \draw (-2,-0.75)   node[font=\small]           {$(-++)$};
    \draw (-2,0)  node[fill=white,font=\tiny] {$(0++)$};
    \draw (-2,0.75)    node[font=\small]           {$(+++)$};
    \draw (-1.4,1.4) node[fill=white,font=\tiny] {$(+0+)$};
    \draw (0,1.5)  node[font=\small]           {$(+-+)$};
    \draw (1.4,1.4)      node[fill=white,font=\tiny] {$(+-0)$};
    \draw (2,0.75)   node[font=\small]           {$(+--)$};
    \draw (2,0)  node[fill=white,font=\tiny] {$(0--)$};
    \draw (2,-0.75)     node[font=\small]           {$(---)$};
    \draw (1.4,-1.4)   node[fill=white,font=\tiny] {$(-0-)$};
\end{tikzpicture}
\caption{The sign sequences of the faces of the hyperplane arrangement in
$\mathbb R^2$ consisting of three distinct lines.\label{figure1}}
\end{figure}
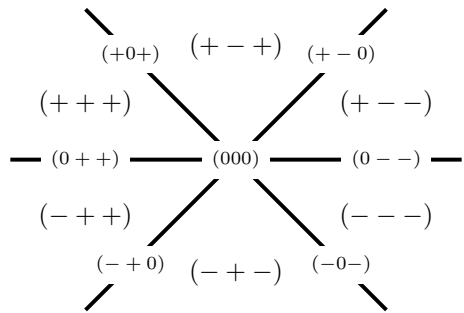

For $F, G \in \FF$ the product $FG$ is
the face of $\AA$ with sign sequence
\begin{gather*}
\sigma_H(FG) =
\begin{cases}
\sigma_H(F), & \text{if } \sigma_H(F) \neq 0, \\
\sigma_H(G), & \text{if } \sigma_H(F) = 0.
\end{cases}
\end{gather*}

\begin{Rmk} There is a nice geometric interpretation of this
product. The face $FG$ is the face that one enters by moving a \emph{small}
positive distance along any straight line from $F$ to $G$.
\end{Rmk}

The following proposition is obvious from the definition of the product in a hyperplane monoid. It justifies our use of the term geometric left regular band and the notation $\{0,+,-\}$ for $L$.
\begin{Prop}
 \label{supp properties}
Let $\AA$ be a hyperplane arrangement with $n$ hyperplanes. Then the hyperplane face monoid of $\AA$ is a submonoid of $\{0,+,-\}^n$.
\end{Prop}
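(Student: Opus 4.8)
The plan is to realize the embedding explicitly through the sign-sequence map and to check that it is an injective monoid homomorphism. Enumerate the hyperplanes as $H_1,\dots,H_n$, let $\FF$ denote the set of faces of $\AA$, and define
\[
\varphi\colon \FF \to \{0,+,-\}^n = L^n, \qquad \varphi(F) = \sigma(F) = (\sigma_{H_1}(F),\dots,\sigma_{H_n}(F)).
\]

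First I would check injectivity, which is immediate from the definition of a face: since $F = \bigcap_{H\in\AA} H^{\sigma_H(F)}$, the face $F$ is recovered from its sign sequence, so distinct faces have distinct images under $\varphi$. The homomorphism property I would then obtain by matching the two product rules coordinate by coordinate. In $L$ the element $0$ is the identity while $+$ and $-$ are left zeroes, so for $a,b\in L$ one has $ab = a$ when $a\neq 0$ and $ab = b$ when $a = 0$; this is verbatim the defining rule for $\sigma_H(FG)$. Hence $\sigma_H(FG) = \sigma_H(F)\,\sigma_H(G)$ in $L$ for every hyperplane $H$, and reading this across all $n$ coordinates gives $\varphi(FG) = \varphi(F)\varphi(G)$. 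Since $\AA$ is central, every hyperplane contains the origin, so $\bigcap_{H\in\AA} H^0$ is a nonempty face with all-zero sign sequence; this is the identity of $\FF$, and it maps to the identity $(0,\dots,0)$ of $L^n$. Therefore $\varphi$ is an injective monoid homomorphism onto a submonoid of $L^n$.

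As the statement itself suggests, no step presents a serious obstacle: the whole content is the observation that the geometric face product is literally the coordinatewise multiplication of the monoid $L$. The only point meriting a word of care, though it is already implicit in the way the product $FG$ was defined above, is that the coordinatewise product of two face sign sequences is again the sign sequence of a genuine face; this is precisely what guarantees that $\FF$ is closed under the induced operation and is therefore a \emph{submonoid}, rather than merely a subset, of $L^n$.
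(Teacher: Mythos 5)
Your proof is correct and matches the paper's approach: the paper simply declares the proposition ``obvious from the definition of the product,'' and your verification --- that the sign-sequence map is injective, that the face product rule is exactly coordinatewise multiplication in $L$ (where $0$ is the identity and $+,-$ are left zeroes), and that centrality supplies the identity face --- is precisely the spelling-out of that observation. Your closing remark is also well placed: closure of the set of sign sequences under the coordinatewise product is exactly what the paper's definition of $FG$ presupposes, so relative to that setup nothing further needs proof.
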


Therefore every hyperplane face monoid is in $\qv(L)$. Since the hyperplane arrangement consisting of the origin in $\mathbb R^{1}$ has face monoid $L$, it follows that the quasivariety generated by all hyperplane monoids is the same as the quasivariety generated by $L$ and that a finite semigroup embeds in a hyperplane face monoid if and only if it belongs to $\qv(L)$.

To explain the complex hyperplane analogue, it is convenient to give an alternative description of the hyperplane face monoid.  Let $\sgn\colon \mathbb R\to L$ be the mapping defined by
\[\sgn(x) = \begin{cases} +, & \text{if}\ x>0\\ 0, & \text{if}\ x=0,\\ -, & \text{if}\ x<0.\end{cases}\]  Suppose that the hyperplane arrangement $\mathcal A$ in $\mathbb R^d$ is defined by the linear forms $f_1=0,\ldots, f_n=0$. Define a mapping $\sigma\colon \mathbb R^d\to L^n$ by \[\sigma(\vec x) = (\sgn(f_1(\vec x)),\ldots,\sgn(f_n(\vec x))).\]  It is not hard to see that $\sigma(\vec x)$ is the sign sequence of the face of the hyperplane arrangement containing $\vec x$ and hence $\mathcal F=\sigma(\mathbb R^d)$.

Let us define an analogue of $\sgn$ for the complex numbers.  Let $Z$ be the left regular band $\{0,+,-,i,j\}$ in which $\{i,j\}$ is a left zero semigroup, $\{0,+,-\}$ is a copy of $L$ and $L$ fixes $i,j$ on both sides.  Note that $ZL\cong \{0,+,-,i\}$.  Conversely, we can separate points of $Z$ into $ZL$ by on the one hand factoring out the ideal $\{i,j\}$ and on the other mapping $\{0,+,-\}$ to $0$ and $i,j$ to $+,-$ (respectively).  Thus $\qv(ZL) = \qv(Z)$.

Now define $\psi\colon \mathbb C\to Z$ as follows. Here $\Im (z)$ is the imaginary part of the complex number $z$.
\[\psi(z) = \begin{cases}i, & \text{if}\ \Im (z)>0\\ j, &\text{if}\ \Im (z)<0\\ \sgn(z), & \text{if}\ z\in \mathbb R\end{cases}\]

If $\mathcal A$ is a complex hyperplane arrangement in $\mathbb C^d$ given by complex linear forms $f_1=0,\ldots, f_n=0$, then we can define a mapping $\Psi\colon \mathbb C^d\to Z^n$ by \[\Psi(\vec z) = (\psi(f_1(\vec z),\ldots,\psi(f_n(\vec z))).\]  One can show that $\mathcal F=\Psi(\mathbb C)$ is a submonoid of $Z^n$ called the hyperplane face monoid of $\mathcal A$.  See~\cite{bjorner2} for details.  The hyperplane face monoid of the arrangement in $\mathbb C^n$ given by $z_i=0$, for $i=1,\ldots,n$, has face monoid $Z^n$ and hence a finite left regular band embeds in a complex hyperplane face monoid if and only if it belongs to $\qv(ZL)$.

\section{A polynomial time description of embeddability in hyperplane face monoids}
Every left regular band is a poset via its $\mathscr R$-order.  In other words, if $S$ is a left regular band and we define $x\leq y$ by $yx=x$, then $(S,\leq)$ is a poset.  The \emph{connected components} of a poset $(P,\leq)$ are the equivalence classes of the equivalence relation $\sim$ generated by $\leq$.  Note that $a\sim b$ if and only if we can find a zig-zag sequence $a=a_0\leq a_1\geq a_2\leq \cdots \geq a_n=b$.  The connected components of $P$ are precisely the connected components of the Hasse diagram of $P$, or equivalently of the order complex of $P$. Recall that the order complex of a poset $P$ is the simplicial complex whose simplices are the chains in $P$. For convenience, we write $a\sim_n b$ if there is a zig-zag path from $a$ to $b$, as above, of length $n$ (note that we allow equalities and so $\sim_{n}\subseteq \sim_{n+1}$).

\begin{Prop}\label{leftzeroimage}
Let $S$ be a left regular band.  Then the connected component relation on $S$ is the least congruence on $S$ whose corresponding quotient is a left zero semigroup.
\end{Prop}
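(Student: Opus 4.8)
The plan is to first translate the two defining features of the target—being a congruence and having a left zero quotient—into a single working condition, and then to match the generated congruence against $\sim$. Observe that a congruence $\rho$ on $S$ has left zero quotient precisely when $xy \mathrel{\rho} x$ for all $x,y\in S$, since the left zero law $\bar x\bar y=\bar x$ in $S/\rho$ is exactly the assertion $\overline{xy}=\bar x$. Hence the least congruence with left zero quotient is the congruence generated by $\{(xy,x):x,y\in S\}$, and the problem reduces to showing that this generated congruence coincides with the connected component relation $\sim$.

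Two of the three containments are easy and I would dispatch them first. Since $x\cdot xy = x^2y = xy$, we have $xy\leq x$ in the $\mathscr R$-order, so each generating pair $(xy,x)$ lies in $\sim$; once $\sim$ is known to be a congruence, it therefore contains the generated one. For minimality in the other direction, suppose $\rho$ is any congruence with left zero quotient and $a\leq b$, i.e. $ba=a$. Then in $S/\rho$ we get $\bar a=\overline{ba}=\bar b\bar a=\bar b$, so $\leq\,\subseteq\rho$, and as $\rho$ is an equivalence relation, $\sim\,\subseteq\rho$. This already exhibits $\sim$ as contained in every congruence with left zero quotient, so it only remains to check that $\sim$ is itself such a congruence. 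That $S/\sim$ is left zero will then be immediate from $xy\leq x$, giving $\overline{xy}=\bar x=\bar x\bar y$, so the entire content is compatibility of $\sim$ with multiplication.

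Since $\sim$ is generated by $\leq$, it suffices to treat a single relation $a\leq b$ and propagate along zig-zags. Left compatibility is direct: from $ba=a$ and the band identities one computes $(cb)(ca)=cbca=cba=ca$, so $ca\leq cb$, and hence $ca\sim cb$; propagating along a zig-zag gives $a\sim b\Rightarrow ca\sim cb$. I expect right compatibility to be the main obstacle, precisely because right multiplication does \emph{not} preserve $\leq$: one checks $(bc)(ac)=bcac=bca$, which need not equal $ac$, so there is no direct comparison between $ac$ and $bc$.

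The device I would use to overcome this is to connect $ac$ and $bc$ through $b$ rather than directly. From $ba=a$ we have $ac=bac$, while $b\cdot bac=bac$ and $b\cdot bc=bc$ give $bac\leq b$ and $bc\leq b$; hence $ac=bac\leq b\geq bc$ is a zig-zag, so $ac\sim bc$. Propagating along a zig-zag yields $a\sim b\Rightarrow ac\sim bc$. Combining the two one-sided statements via $ac\sim bc\sim bd$ (the first step from right compatibility with $a\sim b$, the second from left compatibility with $c\sim d$) shows $\sim$ is a congruence, which together with the left zero property and the minimality observation above completes the proof.
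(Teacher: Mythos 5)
Your proof is correct and follows essentially the same route as the paper's: show that $\sim$ is a congruence using left-compatibility of the $\mathscr R$-order together with $xy\leq x$, observe that the quotient is left zero, and obtain minimality from the fact that the left-zero law collapses the order in any quotient. The only cosmetic differences are your right-compatibility zig-zag $ac=bac\leq b\geq bc$, where the paper gets this more directly from $ac\sim a\sim b\sim bc$ (via $x\sim xy$), and your minimality computation, which is just the element-level unwinding of the paper's observation that homomorphisms are order-preserving and connected components of left zero semigroups are singletons.
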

\begin{proof}
A homomorphism of left regular bands is order-preserving and hence maps connected components into connected components.  As the connected components of a left zero semigroup are singletons, it follows that the connected component relation $\sim$ is contained in the kernel of any homomorphism to a left zero semigroup.  Since $\leq$ is compatible with left multiplication, it is immediate that $\sim$ is compatible with left multiplication.  On the other hand, since $x\geq xy$, it is immediate that $x\sim xy$ and so $\sim$ is a congruence and $S/{\sim}$ is a left zero semigroup.
\end{proof}

If $S$ is a left regular band and $a\mathrel{\mathscr L} b$ (i.e., $Sa=Sab$), then we define \[S_{a,b} = \{s\in S\mid sa=sb\}.\]  Notice that $S_{a,b}$ is a left ideal of $S$ containing $a,b$. Let us say that a left regular band $S$ satisfies condition (CC) if whenever $a\mathrel{\mathscr L} b$ and $a,b$ are in the same connected component of $S_{a,b}$, then we have $a=b$.  Let $Q_n$ be the quasi-identity that states that $a\mathrel{\mathscr L} b$ and $a\sim_n b$ in $S_{a,b}$ implies $a=b$. For example, $Q_3$ is the quasi-identity:
\begin{align*}
(ab=a) &\wedge (ba=b) \wedge (a_{1}a=a_{1}b) \wedge(a_{2}a=a_{2}b) \wedge (a_{3}a=a_{3}b)\\  &\wedge (a_{1}a=a) \wedge (a_{1}a_{2}=a_{2}) \wedge (a_{3}a_{2}=a_{2}) \wedge (a_{3}b=b)\implies a=b.
\end{align*}

Observe that $S$ satisfies (CC) if and only if it satisfies $Q_n$ for all $n$. Hence the class of rectangular bands satisfying (CC) is a quasivariety. Actually we can bound $n$ by, say, $|S|+2$ because we can remove repetitions in the middle of our zig-zag (we may need a repetition at the beginning and at the end to make the zig-zag start by going up and end by going down).  Thus (CC) can be checked in polynomial time in $|S|$.  Alternatively, for each $a\mathrel{\mathscr L} b$, we can compute the comparability graph of the poset $S_{a,b}$ in polynomial time and then do a depth-first search to check if $b$ can be reached from $a$ by a zig-zag path.

\begin{Thm}\label{firstmain}
A left regular band $S$ belongs to $\qv(L)$ if and only if it satisfies condition (CC).
\end{Thm}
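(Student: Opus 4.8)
The plan is to prove the two implications separately, with the forward direction a short computation and the converse carrying the substance. Since $\qv(L)$ consists precisely of the models of the quasi-identities valid in $L$, and since $S$ satisfies (CC) exactly when it satisfies every $Q_n$, the forward direction reduces to checking that $L$ itself satisfies each $Q_n$. This is finite: the only pair of distinct $\eL$-equivalent elements of $L$ is $\{+,-\}$, and $L_{+,-}=\{+,-\}$ is a two-element antichain in the $\mathscr R$-order (its elements are left zeroes, so each absorbs, while $0$ does not belong to $L_{+,-}$). Hence its connected components are the singletons $\{+\}$ and $\{-\}$, so $+$ and $-$ never lie in a common component and every $Q_n$ holds vacuously on $L$. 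Consequently each member of $\qv(L)$, being a model of the quasi-identities of $L$, satisfies (CC).

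For the converse I would show that if $S$ satisfies (CC) then the homomorphisms $S\to L$ separate points, which is equivalent to $S\in\qv(L)$. Fix $a\neq b$. If $\supp a\neq\supp b$ (equivalently $a\not\!\eL\, b$), then $a$ and $b$ are already separated by a homomorphism into the two-element semilattice $\{0,+\}\subseteq L$: the support map is the maximal semilattice image of $S$, and every finite semilattice embeds in a power of $\{0,+\}$. This needs no use of (CC), and it reduces the problem to the case $a\eL b$ with $a\neq b$.

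In this remaining case, observe first that any separating homomorphism $\varphi\colon S\to L$ is forced to send $a$ and $b$ to opposite signs and, more importantly, to map all of $S_{a,b}$ into $\{+,-\}$: since $\supp\circ\,\varphi$ factors through $\supp$, the equality $\supp a=\supp b$ forces $\varphi(a),\varphi(b)$ both nonzero and distinct; and for $s\in S_{a,b}$ we have $sa=sb$, whence $\varphi(s)\varphi(a)=\varphi(s)\varphi(b)$, which in $L$ forces $\varphi(s)\neq0$. Thus on the left ideal $S_{a,b}$ the map $\varphi$ is a homomorphism onto the left zero semigroup $\{+,-\}$, so by Proposition~\ref{leftzeroimage} it is constant on the connected components of $S_{a,b}$. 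By (CC), $a$ and $b$ lie in distinct components, so a component-coloring $f\colon S_{a,b}\to\{+,-\}$ with $f(a)=+$ and $f(b)=-$ certainly exists.

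The remaining, and decisive, task is to extend such an $f$ to a homomorphism $\varphi\colon S\to L$. The natural candidate sets $\varphi(s)=f(sa)$ when $sa$ and $sb$ lie in the same component of $S_{a,b}$ (note that $sa,sb\in S_{a,b}$ always, because $a\eL b$ gives $(sa)a=sa=(sa)b$ and likewise for $sb$) and $\varphi(s)=0$ otherwise; this restricts to $f$ on $S_{a,b}$ and yields $\varphi(a)=+\neq-=\varphi(b)$. Verifying that $\varphi$ respects products is where the full force of (CC), i.e.\ of all the $Q_n$, must enter. I expect this to be the main obstacle, and the subtlety is real: not every separating component-coloring extends. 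In the embedded picture a homomorphism to $L$ amounts to reading a single coordinate (a ``hyperplane''), so a coloring such as ``$+$ on $a$'s component, $-$ on all others'' generally violates the homomorphism identities once $S_{a,b}$ has more than two components. The genuine work is therefore to produce a \emph{coordinate-like} coloring $f$ -- one for which left multiplication by any $s$ with $\varphi(s)\neq0$ carries $S_{a,b}$ into a single component, while $\varphi(s)=0$ forces $\varphi(st)=\varphi(t)$ -- and to deduce its existence from the fact that $a$ and $b$ are separated in $S_{a,b}$. Establishing this coherence, and with it the homomorphism property, is the crux of the converse.
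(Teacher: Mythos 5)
Your forward direction is fine, and your analysis of what a separating homomorphism $S\to L$ must look like on $S_{a,b}$ is correct. But the converse --- the actual content of the theorem --- is not proved: you set up a candidate map $\varphi(s)=f(sa)$ (when $sa\sim sb$) and then explicitly defer the verification that it is a homomorphism, conceding that ``not every separating component-coloring extends'' and that producing a coordinate-like coloring ``is the crux of the converse.'' That crux is exactly what is missing, and it is a genuine gap: condition (CC) by itself gives you no handle on how the components of $S_{a,b}$ interact under left multiplication by elements outside $S_{a,b}$, so there is no apparent way to repair your candidate map directly. Your plan insists on separating $a$ and $b$ by a homomorphism landing in $L$ itself in one shot, and that is precisely the hard road.

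The paper avoids this entirely by two moves you don't make. First, it suffices to separate $a,b$ by a homomorphism into \emph{any} member of $\qv(L)$ (such a member embeds in a power of $L$, so a coordinate projection then separates); one is not obliged to hit $L$ directly. Second, after reducing to $S$ finite, it argues by induction on $|S|$: if some non-identity $c$ satisfies $ca\neq cb$, then $s\mapsto cs$ is a homomorphism onto the strictly smaller left regular band $cS$ (which inherits (CC)), and induction applied to $cS$ separates $ca,cb$; composing finishes that case. Otherwise every non-identity element lies in $S_{a,b}$, and then the quotient $S\to (S_{a,b}/{\sim})\cup\{1\}$ --- legitimate by Proposition~\ref{leftzeroimage} --- separates $a,b$ by (CC) and lands in a left zero semigroup with adjoined identity, which is easily seen to lie in $\qv(L)$. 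In this final configuration the component-coloring idea does work, because there is nothing outside $S_{a,b}$ except possibly an identity; the induction is what funnels the general case into this tractable one. To complete your argument you would need either to supply this inductive reduction or to solve the extension problem you flagged, and the latter is not a routine verification.
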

\begin{proof}
Let $Q$ be the quasivariety of left regular bands satisfying (CC).  Clearly $L\in Q$ so it remains to show $Q\subseteq \qv(L)$.  Since a quasi-identity involves only finitely many variables, a semigroup $S$ belongs to a quasivariety if and only if all its finitely generated subsemigroups belongs do.  Thus we may assume without loss of generality that $S$ is finitely generated and hence finite (as the variety of left regular bands is locally finite). We proceed by induction on $|S|$ with the cases $|S|=0,1$ being clear.  Every semilattice embeds in a direct power of the two-element semilattice and so $\qv(L)$ contains all semilattices.  Recall that the relation $\mathrel{\mathscr L}$ is a congruence on any left regular band and the quotient $S/{\mathrel{\mathscr L}}$ is a semilattice by a well known result of Clifford. Thus $S/{\mathrel{\mathscr L}}\in \qv (L)$ and so we can separate any two elements of $S$ that are not $\mathrel{\mathscr L}$-related by homomorphism into $L$.

Suppose now that $a\mathrel{\mathscr L} b$.  Note that in a left regular band, every left identity is also an identity.  Suppose that $c\in S$ is not an identity and that $ca\neq cb$.  Then we have a homomorphism $\varphi\colon S\to cS$ given by $\varphi(s)=cs$ (using that $csct=cst$) and $\varphi$ separates $a,b$.  As $cS$ is a proper subsemigroup of $S$, it follows by induction that there is a homomorphism $\psi\colon cS\to L$ separating $ca,cb$.  Thus $\psi\varphi$ separates $a,b$.  Thus we may assume that $S\setminus S_{a,b}$ is either empty or consists of only the identity.  The canonical homomorphism $S\to (S_{a,b}/{\sim})\cup \{1\}$ separates $a,b$ by definition of (CC).  As it is well known and easy to see that any left zero semigroup with adjoined identity belongs to $\qv(L)$, this completes the proof in light of Proposition~\ref{leftzeroimage}.
\end{proof}

\begin{Cor}
A finite left regular band embeds in a hyperplane monoid if and only if it satisfies condition (CC).
\end{Cor}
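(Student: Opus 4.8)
The plan is to obtain this corollary as an immediate consequence of Theorem~\ref{firstmain}, combined with the identification --- recorded earlier in this section and in Proposition~\ref{supp properties} --- of the semigroups embeddable in hyperplane face monoids with the members of $\qv(L)$. Since a left regular band is in particular a finite semigroup, I expect no fresh work beyond assembling these two facts in the correct order.

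First I would recall that a finite semigroup embeds in a hyperplane face monoid if and only if it lies in $\qv(L)$. The forward implication is immediate from Proposition~\ref{supp properties}: every hyperplane face monoid sits inside some $L^n=\{0,+,-\}^n$, and $\qv(L)$ is closed under taking subsemigroups, so any subsemigroup of a hyperplane face monoid belongs to $\qv(L)$. For the converse, a finite member of $\qv(L)$ embeds by definition in a direct power $L^n$, and $L^n$ is itself the face monoid of the coordinate hyperplane arrangement in $\mathbb R^n$; hence such a semigroup embeds in a hyperplane face monoid.

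Next I would invoke Theorem~\ref{firstmain}, which characterizes membership in $\qv(L)$ for left regular bands as precisely condition (CC). Chaining the two equivalences --- ``embeds in a hyperplane monoid'' iff ``belongs to $\qv(L)$'' iff ``satisfies (CC)'' --- yields the statement. The only point requiring any attention is that restricting from general finite semigroups to left regular bands is harmless, which it plainly is, since left regular bands form a subclass of finite semigroups and the biconditional specializes directly. I therefore anticipate no genuine obstacle: the substantive content, namely the induction on $|S|$, was already dispatched in the proof of Theorem~\ref{firstmain}, and what remains here is purely the repackaging of that theorem in geometric language.
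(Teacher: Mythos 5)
Your proposal is correct and matches the paper's own (implicit) justification: the paper establishes in the hyperplane section, via Proposition~\ref{supp properties} and the fact that $L^n$ is the face monoid of the coordinate arrangement in $\mathbb R^n$, that a finite semigroup embeds in a hyperplane face monoid if and only if it lies in $\qv(L)$, and the corollary is then just Theorem~\ref{firstmain} restated in that language. No gaps; this is the intended argument.
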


Suppose now that $S$ is a left regular band and $a,b\in S$ with $a\mathrel{\mathscr L} b$.  Define $S'_{a,b} =\{s\in S_{a,b}\mid a\in Ss\}$.  Note that $S'_{a,b}$ is a subsemigroup of $S_{a,b}$ containing $a,b$.  Let us say that $S$ satisfies (CC') if whenever $a\mathrel{\mathscr L} b$ and $a,b$ are in the same connected component of $S'_{a,b}$ one has $a=b$.  The left regular bands satisfying (CC') form a quasivariety.  It is defined by the quasi-identities $Q_n'$ stating that $a\mathrel{\mathscr L} b$ and $a\sim_n b$ in $S'_{a,b}$ implies $a=b$.  Again, one can check in polynomial time whether $S$ satisfies (CC').

\begin{Thm}
A left regular band belongs to $\qv(ZL)$ if and only if it satisfies condition (CC').
\end{Thm}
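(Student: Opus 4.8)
The plan is to follow the template of the proof of Theorem~\ref{firstmain} as closely as possible, replacing $S_{a,b}$ by $S'_{a,b}$ throughout and replacing the target ``left zero semigroup with an adjoined identity'' by a left zero semigroup with both an identity and a (multiplicative) zero adjoined. For the forward implication, since the class of left regular bands satisfying (CC') is a quasivariety (it is cut out by the quasi-identities $Q'_n$), to prove that every member of $\qv(ZL)$ satisfies (CC') it suffices to check that $ZL$ itself does. The only $\mathscr L$-class of $ZL$ with more than one element is $\{+,-\}$, so I only need to examine $a=+$, $b=-$: here $ZL_{+,-}=\{+,-,i\}$ but $ZL'_{+,-}=\{+,-\}$, because $+\notin ZL\cdot i$. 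In $ZL_{+,-}$ one has the zig-zag $+\geq i\leq -$, so $+\sim -$ and (CC) fails; but in $ZL'_{+,-}$ the two left zeroes are incomparable and lie in distinct connected components, so (CC') holds vacuously. This little computation is the whole point of passing from $S_{a,b}$ to $S'_{a,b}$: deleting the elements $s$ with $a\notin Ss$ severs the spurious connection routed through the zero.

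For the converse I would argue by induction on $|S|$, reducing to the finite case exactly as in Theorem~\ref{firstmain}, and show that distinct elements of $S$ can be separated by a homomorphism into a power of $ZL$. Two elements not related by $\mathscr L$ are separated through the semilattice quotient $S/{\mathscr L}$, which lies in $\qv(L)\subseteq\qv(ZL)$. So fix $a\mathrel{\mathscr L} b$ with $a\neq b$. As before, if some non-identity $c$ satisfies $ca\neq cb$, then $s\mapsto cs$ is a homomorphism onto the proper subsemigroup $cS$; this $cS$ satisfies (CC') and so by induction lies in $\qv(ZL)$, and composing with a separating homomorphism for $ca,cb$ settles this case. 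Thus I may assume $S\setminus S_{a,b}\subseteq\{1\}$.

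In this situation I would build one homomorphism $\theta\colon S\to T$, where $T$ is the left zero semigroup $S'_{a,b}/{\sim}$ (a left zero semigroup by Proposition~\ref{leftzeroimage}) with an identity $1_T$ and a zero $0_T$ adjoined. Such a $T$ embeds in a power of $ZL$ (send the left zeroes to vectors over $\{+,-\}$, the identity to the all-$0$ vector, and the zero to the all-$i$ vector), so $T\in\qv(ZL)$. The map is $\theta(1)=1_T$, $\theta(s)=0_T$ for $s\in S_{a,b}\setminus S'_{a,b}$, and $\theta(s)=[s]_{\sim}$ for $s\in S'_{a,b}$. Separation of $a,b$ is then immediate: since $a\mathrel{\mathscr L} b$ and $a\neq b$, condition (CC') forbids $a\sim b$ in $S'_{a,b}$, whence $\theta(a)\neq\theta(b)$.

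The main work, and the only genuinely new step beyond Theorem~\ref{firstmain}, is checking that $\theta$ is a homomorphism; the crux is showing that the middle layer $S_{a,b}\setminus S'_{a,b}$ really maps like a zero. Here I would use that for $s\in S_{a,b}$ one has $s\in S'_{a,b}$ iff $a\in Ss$ iff $\overline a\leq\overline s$ in the semilattice $S/{\mathscr L}$. Since $\overline{st}=\overline s\wedge\overline t$, for $s,t\in S_{a,b}$ (so that $st\in S_{a,b}$ as well, $S_{a,b}$ being a left ideal) I obtain the key equivalence $st\in S'_{a,b}\iff s\in S'_{a,b}$ and $t\in S'_{a,b}$. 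This delivers the required products at once: if $s,t\in S'_{a,b}$ then $st\in S'_{a,b}$ and $st\leq s$ forces $st\sim s$, so $\theta(st)=[s]_{\sim}=\theta(s)\theta(t)$; if either $s$ or $t$ lies in $S_{a,b}\setminus S'_{a,b}$ then $st\notin S'_{a,b}$, so $\theta(st)=0_T=\theta(s)\theta(t)$; and the cases involving $1$ are trivial. I expect this equivalence, which packages the role of the adjoined zero $i$ of $ZL$ into the semilattice meet, to be the decisive computation, the remainder being bookkeeping parallel to the $L$ case.
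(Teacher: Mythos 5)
Your proof is correct and takes essentially the same approach as the paper: the same induction and reductions, and your separating map $\theta$ is exactly the paper's homomorphism $S\to\{1\}\cup T\cup\{0\}$, since under the standing assumption $S\setminus S_{a,b}\subseteq\{1\}$ your set $S_{a,b}\setminus S'_{a,b}$ coincides with the paper's ideal $I=\{x\in S\mid a\notin Sx\}$. The only difference is that you spell out details the paper leaves implicit, namely the check that $ZL$ satisfies (CC') and the verification (via the semilattice quotient $S/{\mathrel{\mathscr L}}$) that $\theta$ is a homomorphism.
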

\begin{proof}
The left regular band $ZL$ clearly satisfies (CC').  By the argument in the proof of Theorem~\ref{firstmain}, it suffices to show that if $S$ is a finite left regular band satisfying (CC'), then $S\in \qv(ZL)$, which we proceed to do by induction on $|S|$.  The base cases $|S|=0,1$ are trivial.  As before, since semilattices belong to $\qv(ZL)$, it follows that if $a,b\in S$ are not $\mathrel{\mathscr L}$-equivalent, then we can separate them by homomorphisms into $ZL$.  Suppose next that $a\mathrel{\mathscr L} b$.  If there exists $c\in S$ a non-identity element with $ca\neq cb$, then as before we can use the homomorphism $S\to cS$ to separate $a$ and $b$ and then proceed by induction using that $|cS|<|S|$.  Thus we may assume that $S_{a,b}$ contains all non-identity elements of $S$.  Let $I$ be the ideal of elements $x\in S$ with $a\notin Sx$ and let $T$ be a left zero semigroup with elements the connected components of $S'_{a,b}$.  Then we can define a homomorphism $S\to \{1\}\cup T\cup \{0\}$ by sending $I$ to $0$, the identity of $S$ (if there is one) to $1$ and sending each element of $S_{a,b}'$ to its connected component.  This gives a map from $S$ into an element of $\qv(ZL)$ separating $a,b$.  This completes the proof.
\end{proof}

\begin{Cor}
A finite left regular band embeds in a complex hyperplane semigroup if and only if it satisfies condition (CC').
\end{Cor}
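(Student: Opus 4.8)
The plan is to deduce this directly from the immediately preceding Theorem together with the observations relating complex hyperplane face monoids to $\qv(ZL)$ made at the end of Section~3. The Theorem already identifies membership in $\qv(ZL)$ with condition (CC'), so all that remains is to see that, for a finite left regular band, embeddability in a complex hyperplane semigroup is the same as membership in $\qv(ZL)$. Both halves of this equivalence were recorded above, and I would simply assemble them.

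First I would treat the ``only if'' direction. If a finite left regular band $S$ embeds in a complex hyperplane face monoid $\mathcal F$, then since $\mathcal F = \Psi(\mathbb C^d)$ is a submonoid of $Z^n$, we have $\mathcal F \in \qv(Z)$, and hence $S \in \qv(Z)$ because quasivarieties are closed under subsemigroups. Using the identity $\qv(Z) = \qv(ZL)$ established above, this gives $S \in \qv(ZL)$, and the Theorem then yields that $S$ satisfies condition (CC').

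For the converse I would use that the coordinate arrangement in $\mathbb C^n$ given by $z_i = 0$ has face monoid exactly $Z^n$, so that the complex hyperplane face monoids generate $\qv(Z) = \qv(ZL)$. If $S$ is a finite left regular band satisfying (CC'), then by the Theorem $S \in \qv(ZL) = \qv(Z)$, so $S$ embeds in some finite power $Z^n$; since $Z^n$ is itself the face monoid of a complex hyperplane arrangement, $S$ embeds in a complex hyperplane semigroup. As these arguments are purely formal once the preceding Theorem and the dictionary of Section~3 are in hand, I do not anticipate any genuine obstacle; the only points to keep straight are the passage $\qv(ZL) = \qv(Z)$ and the realization of $Z^n$ as an honest face monoid, both of which are supplied before the statement.
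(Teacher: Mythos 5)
Your proposal is correct and matches the paper's intended argument: the Corollary is exactly the combination of the preceding Theorem with the Section~3 facts that complex hyperplane face monoids are submonoids of $Z^n$, that $Z^n$ is itself the face monoid of the coordinate arrangement $z_i=0$ in $\mathbb C^n$, and that $\qv(Z)=\qv(ZL)$. The only implicit step, that a \emph{finite} member of $\qv(Z)$ embeds in a \emph{finite} power $Z^n$, is justified by the paper's remark that membership in $\qv(S)$ for finite semigroups means homomorphisms into $S$ separate points, of which there are only finitely many.
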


\section{On the non-existence of a finite basis of quasi-identities}
Observe that if a quasivariety is finitely based (that is, has a finite basis of quasi-identities), then it can be defined by a set of quasi-identities over a finite alphabet.  Clearly, if a quasivariety $Q$ can be defined by quasi-identities in $k$ variables, then a semigroup $S$ belongs to $Q$ if and only if each $k$-generated subsemigroup of $S$ belongs to $Q$.  Thus to show that $Q$ cannot be defined by quasi-identities over a finite alphabet (and hence is not finitely based), it suffices to construct for each $k\geq 0$ a semigroup $S_k$ such that $S_k\notin Q$ but all $k$-generated subsemigroups of $S_k$ belong to $Q$.

In this section we construct a sequence $B_n$, for $n\geq 3$, of left regular bands such that $|B_n|\to \infty$, $B_n\notin \qv(ZL)$ and every proper subsemigroup of $B_n$ belongs to $\qv(L)$.  As the variety of left regular bands is locally finite, it follows that the minimal number of generators of $B_n$ goes to infinity as $n\to \infty$. The argument of the previous paragraph then applies to yield that $\qv(L)$ and $\qv(ZL)$ cannot be defined by quasi-identities in finitely many variables and hence cannot be finitely based.  Our construction is based on hyperplane face monoids.

Let $F_n$ be the hyperplane face monoid associated to the $n$ lines through the origin of $\mathbb R^2$ at angles $2k\pi/n$ to the $x$-axis with $k=0,1,\ldots,n-1$, for $n\geq 2$.  Figure~\ref{figure1} shows $F_3$.  Let $C_1,C_2,\ldots, C_{2n}$ be the chambers visited in counter-clockwise order starting from the $x$-axis and let $r_1,r_2,\ldots,r_{2n}$ be the rays in counter-clockwise order starting from the positive $x$-axis.  See Figure~\ref{figure2} for the case $n=3$.
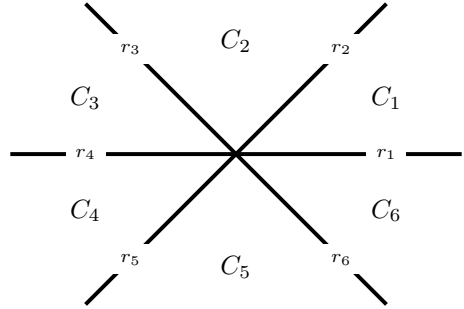
\begin{figure}[tb]
\centering
\begin{tikzpicture}
    %\draw[step=0.25cm,color=gray] (-3,-2) grid (3,2);
    \draw[line width=1.5pt] (-2,-2) -- (2,2);
    \draw[line width=1.5pt] (2,-2) -- (-2,2);
    \draw[line width=1.5pt] (-3,0) -- (3,0);
%    \draw (0,0)       node[fill=white,font=\tiny] {$(000)$};
    \draw (0,-1.5)    node[font=\small]           {$C_5$};
    \draw (-1.4,-1.4)       node[fill=white,font=\tiny] {$r_5$};
    \draw (-2,-0.75)   node[font=\small]           {$C_4$};
    \draw (-2,0)  node[fill=white,font=\tiny] {$r_4$};
    \draw (-2,0.75)    node[font=\small]           {$C_3$};
    \draw (-1.4,1.4) node[fill=white,font=\tiny] {$r_3$};
    \draw (0,1.5)  node[font=\small]           {$C_2$};
    \draw (1.4,1.4)      node[fill=white,font=\tiny] {$r_2$};
    \draw (2,0.75)   node[font=\small]           {$C_1$};
    \draw (2,0)  node[fill=white,font=\tiny] {$r_1$};
    \draw (2,-0.75)     node[font=\small]           {$C_6$};
    \draw (1.4,-1.4)   node[fill=white,font=\tiny] {$r_6$};
\end{tikzpicture}
\caption{Three lines in the plane.\label{figure2}}
\end{figure}
Assume now that $n\geq 3$.  Denote by $F_n'$ the subsemigroup obtained from $F_n$ by removing the origin and the $\mathscr L$-class corresponding to the $x$-axis, i.e., $r_1$ and $r_{n+1}$.  Then $C_n,C_{n+1}$ are separated only by the $x$-axis and hence are on the same side of each line remaining in $F_n'$.  It follows that $xC_n=xC_{n+1}$ for all $x\in F_n'$.  Also $C_n,C_{n+1}$ are left zeroes.  Therefore the equivalence relation on $F_n'$ whose only non-singleton block is $\{C_n,C_{n+1}\}$ is a congruence.  Let $B_n$ be the corresponding quotient and denote by $C$ the equivalence class $\{C_n,C_{n+1}\}$.  See Figure~\ref{figure3} for the case $n=3$.
\begin{figure}[tb]
\centering
\begin{tikzpicture}
    %\draw[step=0.25cm,color=gray] (-3,-2) grid (3,2);
    \draw[line width=1.5pt] (-2,-2) -- (2,2);
    \draw[line width=1.5pt] (2,-2) -- (-2,2);
    \draw[dashed, thick] (0,0) -- (3,0);
%    \draw (0,0)       node[fill=white,font=\tiny] {$(000)$};
    \draw (0,-1.5)    node[font=\small]           {$C_5$};
    \draw (-1.4,-1.4)       node[fill=white,font=\tiny] {$r_5$};
 %   \draw (-2,-0.75)   node[font=\small]           {$C_4$};
    \draw (-2,0)  node[fill=white,font=\small] {$C$};
   % \draw (-2,0.75)    node[font=\small]           {$C_3$};
    \draw (-1.4,1.4) node[fill=white,font=\tiny] {$r_3$};
    \draw (0,1.5)  node[font=\small]           {$C_2$};
    \draw (1.4,1.4)      node[fill=white,font=\tiny] {$r_2$};
    \draw (2,0.75)   node[font=\small]           {$C_1$};
    \draw (2,0)  node[fill=white,font=\tiny] {$r_1$};
    \draw (2,-0.75)     node[font=\small]           {$C_6$};
    \draw (1.4,-1.4)   node[fill=white,font=\tiny] {$r_6$};
\end{tikzpicture}
\caption{The semigroup $B_3$.\label{figure3}}
\end{figure}
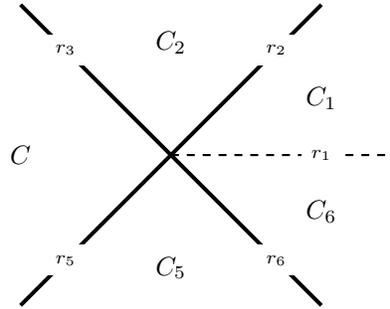

Let us begin by proving that $B_n$ does not belong to $\qv{(ZL)}$.  First observe that since $C_1,C_{2n}$ are only separated by the $x$-axis, whose faces are removed to form $F_n'$, it follows that $xC_1=xC_{2n}$ for all $x\in F_n'$ and hence the same is true in $B_n$.  Thus $B_n=(B_n)_{C_1,C_{2n}}$.  It suffices now to compute the connected components of $B_n$.  The Hasse diagram of $B_n$ is the zig-zag path \[C_1< r_2> C_2< r_3\cdots < r_n> C< r_{n+2}> C_{n+2}< r_{n+3} \cdots <r_{2n}> C_{2n}\] from $C_1$ to $C_{2n}$; see Figure~\ref{figure4} for the case $n=3$.  It follows that $B_n$ does not satisfy (CC').

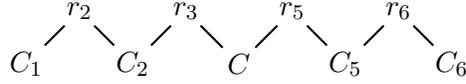
\begin{figure}[tb]
\centering
\begin{tikzpicture}
\node (C) at (0,0) {$C$};
\node [above left of =C] (r3) {$r_3$};
\node [above right of =C] (r5) {$r_5$};
\node [below left of =r3] (C2) {$C_2$};
\node [below right of =r5] (C5) {$C_5$};
\node [above left of =C2] (r2) {$r_2$};
\node [above right of =C5] (r6) {$r_6$};
\node [below left of =r2] (C1) {$C_1$};
\node [below right of =r6] (C6) {$C_6$};
\draw [thick, shorten <=-2pt, shorten >=-2pt] (C1) to (r2);
\draw [thick, shorten <=-2pt, shorten >=-2pt] (C2) to (r2);
\draw [thick, shorten <=-2pt, shorten >=-2pt] (C2) to (r3);
\draw [thick, shorten <=-2pt, shorten >=-2pt] (C) to (r3);
\draw [thick, shorten <=-2pt, shorten >=-2pt] (C) to (r5);
\draw [thick, shorten <=-2pt, shorten >=-2pt] (C5) to (r5);
\draw [thick, shorten <=-2pt, shorten >=-2pt] (C5) to (r6);
\draw [thick, shorten <=-2pt, shorten >=-2pt] (C6) to (r6);
\end{tikzpicture}
\caption{Hasse diagram of $B_3$.\label{figure4}}
\end{figure}

Identifying $C_1$ and $C_{2n}$ is a congruence on $B_n$ (because $C_1,C_{2n}$ lie on the same side of every hyperplane except the $x$-axis, which is removed in forming $F_n'$, and they are left zeroes).  The quotient is isomorphic to $F_{n-1}$ with the origin removed.  Thus we can separate all pairs of elements of $B_n$ except $\{C_1,C_{2n}\}$ by a homomorphism into a hyperplane face monoid.  In particular, any subsemigroup of $B_n$ that does not contain both $C_1$ and $C_{2n}$ belongs to $\qv(L)$.  On the other hand, because the Hasse diagram of $B_n$ is a line from $C_1$ to $C_{2n}$ if $S$ is any proper subsemigroup of $B_n$ containing $\{C_1,C_{2n}\}$, then $C_1$ and $C_{2n}$ are in different connected components of $S$ and hence $S\to S/{\sim}$ is a homomorphism from $S$ to a left zero semigroup (and hence a member of $\qv(L)$) separating $C_1$ and $C_{2n}$.  This completes the proof that any proper subsemigroup of $B_n$ belongs to $\qv(L)$.  We have thus proven the following.

\begin{Thm} \label{nofinbasis}
If $Q$ is a quasivariety generated by a finite semigroup and  $\qv(L) \subseteq Q \subseteq \qv(ZL)$ then $Q$ has no finite basis of quasi-identities.  In fact, $Q$ cannot be defined by quasi-identities over a finite alphabet.
\end{Thm}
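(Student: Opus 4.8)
The plan is to assemble the facts already established about the family $B_n$ into the membership criterion stated at the opening of this section. Recall that, to show $Q$ cannot be defined by quasi-identities over a finite alphabet (and hence is not finitely based), it suffices to produce, for each $k\geq 0$, a semigroup $S_k$ with $S_k\notin Q$ but all of whose $k$-generated subsemigroups belong to $Q$. I would take $S_k=B_{n(k)}$ for an appropriately large index $n(k)$ drawn from the family $B_n$ constructed above.

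First I would verify the two membership facts at the endpoints of the hypothesis. Since it has been shown that $B_n\notin \qv(ZL)$ and we are assuming $Q\subseteq \qv(ZL)$, it follows immediately that $B_n\notin Q$ for every $n\geq 3$. In the other direction, it has been shown that every proper subsemigroup of $B_n$ belongs to $\qv(L)$, and since $\qv(L)\subseteq Q$, every proper subsemigroup of $B_n$ therefore belongs to $Q$.

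The crux is then to guarantee, for each fixed $k$, that every $k$-generated subsemigroup of $B_n$ is \emph{proper}, for which it suffices to know that $B_n$ itself is not $k$-generated once $n$ is large. Here I would invoke local finiteness of the variety of left regular bands: the free left regular band on $k$ generators is finite, so there is a bound $M_k$ on the order of any $k$-generated left regular band. Since $|B_n|\to \infty$, one may choose $n(k)$ with $|B_{n(k)}|>M_k$, whence $B_{n(k)}$ requires more than $k$ generators and each of its $k$-generated subsemigroups is a proper subsemigroup, hence lies in $Q$ by the previous paragraph. Setting $S_k=B_{n(k)}$ then gives $S_k\notin Q$ with all $k$-generated subsemigroups in $Q$, so $Q$ cannot be defined by quasi-identities in $k$ variables; as $k$ is arbitrary, $Q$ admits no finite basis.

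I expect the only genuinely delicate point to be the control of the number of generators in the third paragraph; everything else is bookkeeping with the containments $\qv(L)\subseteq Q\subseteq \qv(ZL)$. The bound $M_k$ is the single place where a quantitative fact (local finiteness, giving a finite free object) is used rather than a set-theoretic inclusion, but this is standard for left regular bands and requires no new work.
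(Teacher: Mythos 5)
Your proposal is correct and follows essentially the same route as the paper: the paper's proof consists precisely of the construction of the $B_n$ together with the observations that $B_n\notin\qv(ZL)$, that every proper subsemigroup of $B_n$ lies in $\qv(L)$, and that local finiteness of the variety of left regular bands forces the minimal number of generators of $B_n$ to tend to infinity, after which the $k$-variable criterion from the start of the section finishes the argument. Your only addition is making the bound $M_k$ (the order of the free left regular band on $k$ generators) explicit, which is exactly the content of the paper's appeal to local finiteness.
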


As a corollary, we obtain $\qv(L)$ is a minimal non-finitely based quasivariety.

\begin{Cor}
$\qv(L)$ is a minimal non-finitely based quasivariety.
\end{Cor}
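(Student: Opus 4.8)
The plan is to read \emph{minimal non-finitely based} in the obvious way: the quasivariety $\qv(L)$ should itself fail to have a finite basis of quasi-identities, while every quasivariety properly contained in it should possess one. The corollary therefore splits into two independent assertions, and I would establish them separately.

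First I would dispatch the non-finitely based part by a direct appeal to Theorem~\ref{nofinbasis}. Taking $Q=\qv(L)$, the hypotheses are trivially satisfied: $L$ is a finite semigroup generating $\qv(L)$, and the inclusions $\qv(L)\subseteq\qv(L)\subseteq\qv(ZL)$ hold vacuously. Hence $\qv(L)$ has no finite basis of quasi-identities; in fact, by the stronger conclusion of that theorem, it cannot even be defined by quasi-identities over a finite alphabet.

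For minimality I would invoke the classification of the subquasivarieties of $\qv(L)$ recalled in the introduction (see~\cite{Gerhard}): $\qv(L)$ has only finitely many proper subquasivarieties, and each of them is finitely based. A proper subquasivariety of $\qv(L)$ is precisely a quasivariety strictly contained in $\qv(L)$, so this statement says exactly that everything lying strictly below $\qv(L)$ in the lattice of quasivarieties admits a finite basis. Combining the two parts, $\qv(L)$ is non-finitely based while all of its proper subquasivarieties are finitely based, which is the assertion of minimality.

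I do not anticipate a genuine obstacle, since both ingredients are already in hand: Theorem~\ref{nofinbasis} supplies the failure of a finite basis, and the cited structural result of~\cite{Gerhard} supplies the finite basability of everything strictly below. The only point requiring a little care is bookkeeping what \emph{minimal} means for quasivarieties (as opposed to for varieties), and confirming that the finitely many proper subquasivarieties enumerated in~\cite{Gerhard} genuinely exhaust all quasivarieties strictly contained in $\qv(L)$, and not merely the proper subvarieties.
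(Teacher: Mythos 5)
Your proposal is correct and matches the paper's intended argument exactly: the paper gives no explicit proof of this corollary, deriving it immediately from Theorem~\ref{nofinbasis} (applied with $Q=\qv(L)$) together with the fact, cited in the introduction from~\cite{Gerhard}, that $\qv(L)$ has only finitely many proper subquasivarieties, all finitely based. Your closing caution about \cite{Gerhard} covering subquasivarieties (not merely subvarieties) is exactly the right point to check, and it is indeed what that reference provides.
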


\section{Open problems}

In Section~\ref{history} we mentioned two open problems. We describe them in detail here.

\begin{Problem}
Is it true that the quasivarieties $\qv(L)$ or $\qv(ZL)$ have no independent basis for their quasi-identities?
\end{Problem}

We remarked in Section~\ref{history} that Sapir proved~\cite{Sapir3} that the semigroup $C_{3,1}$ has no independent basis of quasi-identities. It is therefore reasonable to ask if the same is true for $\qv(L)$ and $\qv(ZL)$.

\begin{Problem}
Is it true that any left or right regular band (band, completely regular semigroup, finite semigroup) that contains either $L$ or its dual $R$ as a subsemigroup has no finite basis for its quasi-identities? That is, is $L$ SNFQB for left regular bands (bands, completely regular semigroups, finite semigroups)?
\end{Problem}

Of course a positive solution to this problem would be a major contribution to the decidability of the finire basis problem for quasi-identities of finite semigroups. As mentioned in section~\ref{history}, no semigroup with index more than $2$ has a finite basis of quasi-identities~\cite{JV}. Sapir's thesis~\cite{SapirThesis} has examples of bands in the variety $[\![x^{2}=x, xyz =xyzxz]\!]$ that are SNQFB for the class of all finite semigroups, but none of these are LRBs.

Thus a positive solution to this problem for completely regular semigroups, for example,  would complete the decidability question for semigroups of index $1$. As far as we know, no semigroup of size greater than $2$ and not covered by the results of~\cite{JV} or~\cite{Gerhard} has been proven to have a finite basis for its quasi-identities. In particular, there is no non-normal band that is known to have a finite basis for its quasi-identities (to the best of our knowledge).

\section*{Acknowledgements}

The authors would like to thank Igor Dolinka for helpful correspondence on questions related to this paper and Mark Sapir for explaining in detail the results of his thesis~\cite{SapirThesis} that impinge upon this paper.
%\bibliographystyle{abbrv}
%\bibliography{standard2}

\begin{thebibliography}{10}

\bibitem{Aguiar}
M.~Aguiar and S.~Mahajan.
\newblock {\em Coxeter groups and {H}opf algebras}, volume~23 of {\em Fields
  Institute Monographs}.
\newblock American Mathematical Society, Providence, RI, 2006.
\newblock With a foreword by Nantel Bergeron.

\bibitem{aguiarspecies}
M.~Aguiar and S.~Mahajan.
\newblock {\em Monoidal functors, species and {H}opf algebras}, volume~29 of
  {\em CRM Monograph Series}.
\newblock American Mathematical Society, Providence, RI, 2010.
\newblock With forewords by Kenneth Brown and Stephen Chase and Andr{\'e}
  Joyal.

\bibitem{Bergman}
C. Bergman and G. Slutzki.
\newblock Complexity of some problems concerning varieties and quasi-varieties of algebras.
\newblock {\em SIAM J. Comput.}, 30(2):359–-382, 2000.

\bibitem{BHR}
P.~Bidigare, P.~Hanlon, and D.~Rockmore.
\newblock A combinatorial description of the spectrum for the {T}setlin library
  and its generalization to hyperplane arrangements.
\newblock {\em Duke Math. J.}, 99(1):135--174, 1999.

\bibitem{Bir}
A. P. Biryukov,
\newblock Varieties of idempotent semigroups,
\newblock {\em Algebra and Logic}, 9(3):153-–164, 1970.

\bibitem{bjorner2}
A.~Bj{\"o}rner.
\newblock Random walks, arrangements, cell complexes, greedoids, and
  self-organizing libraries.
\newblock In {\em Building bridges}, volume~19 of {\em Bolyai Soc. Math.
  Stud.}, pages 165--203. Springer, Berlin, 2008.

\bibitem{Brown1}
K.~S. Brown.
\newblock Semigroups, rings, and {M}arkov chains.
\newblock {\em J. Theoret. Probab.}, 13(3):871--938, 2000.

\bibitem{Brown2}
K.~S. Brown.
\newblock Semigroup and ring theoretical methods in probability.
\newblock In {\em Representations of finite dimensional algebras and related
  topics in Lie theory and geometry}, volume~40 of {\em Fields Inst. Commun.},
  pages 3--26. Amer. Math. Soc., Providence, RI, 2004.

\bibitem{DiaconisBrown1}
K.~S. Brown and P.~Diaconis.
\newblock Random walks and hyperplane arrangements.
\newblock {\em Ann. Probab.}, 26(4):1813--1854, 1998.

\bibitem{universalalgebra}
S.~Burris and H.~P. Sankappanavar.
\newblock {\em A course in universal algebra}, volume~78 of {\em Graduate Texts
  in Mathematics}.
\newblock Springer-Verlag, New York, 1981.

\bibitem{GrahamChung}
F.~Chung and R.~Graham.
\newblock Edge flipping in graphs.
\newblock {\em Adv. in Appl. Math.} 48(1):37--63, 2012.

\bibitem{Fenn}
Charles Fennemore,
\newblock All varieties of bands.
\newblock {\em Semigroup Forum} 1(1):172–-179, 1970.

\bibitem{Gerhard}
J.~A. Gerhard and A. Shafaat.
\newblock Semivarieties of idempotent semigroups.
\newblock {\em Proc. London Math. Soc.} 22:667--680, 1971.

\bibitem{Gorbunov}
V.~A. Gorbunov
\newblock Covers in lattices of quasivarieties
and the independent axiomatizability.
\newblock {\em Algebra i
Logika} 16(5):507--548, 1977.

\bibitem{Jackson}
M. Jackson and R. McKenzie.
\newblock Interpreting graph colorability in finite semigroups.
\newblock {\em Internat. J. Algebra Comput.} 16(1): 119–-140, 2006.


\bibitem{JV}
M. Jackson and M. Volkov.
\newblock Relatively inherently nonfinitely $q$-based semigroups.
\newblock {\em Trans. Amer. Math. Soc.} 361(4):2181–-2206, 2009.

\bibitem{LawWill}
J. Lawrence and R. Willard.
\newblock On finitely based groups and nonfinitely based quasivarieties.
\newblock {\em Journal of Algebra} 203(1):9–-11, 1998.

\bibitem{graphic1}
F.~W. Lawvere.
\newblock Qualitative distinctions between some toposes of generalized graphs.
\newblock In {\em Categories in computer science and logic ({B}oulder, {CO},
  1987)}, volume~92 of {\em Contemp. Math.}, pages 261--299. Amer. Math. Soc.,
  Providence, RI, 1989.

\bibitem{moregraphic}
F.~W. Lawvere.
\newblock More on graphic toposes.
\newblock {\em Cahiers Topologie G\'eom. Diff\'erentielle Cat\'eg.},
  32(1):5--10, 1991.
\newblock International Category Theory Meeting (Bangor, 1989 and Cambridge,
  1990).

\bibitem{Lawvere}
F.~W. Lawvere.
\newblock Linearization of graphic toposes via {C}oxeter groups.
\newblock J. Pure Appl. Algebra, 168(2-3):425--436, 2002.
\newblock Category theory 1999 (Coimbra).

\bibitem{Lyndon}
R.~C. Lyndon.
\newblock Identities in two-valued calculi.
\newblock  {\em Trans. Am. Math. Soc.} 71(3):457--465, 1951.

\bibitem{MargSap}
S.~W. Margolis and M.~V. Sapir.
\newblock Quasi-identities of finite semigroups and symbolic dynamics.
\newblock {\em Israel J. Math.} 92(1--3):317–-331, 1995.

\bibitem{lrbglobal}
S.~W. Margolis, F.~V. Saliola and B. Steinberg.
\newblock Combinatorial topology and the global dimension of algebras arising in combinatorics.
\newblock Submitted, 2012.

\bibitem{McKen}
McKenzie, R.
\newblock Tarski's finite basis problem is undecidable.
\newblock {\em Internat. J. Algebra Comput.} 6(1):49–-104, 1996.

\bibitem{OP}
S. Oates and M.~B. Powell.
\newblock Identical relations in finite groups.
\newblock  {\em J. Algebra} 1:11--39, 1964.

\bibitem{Olsh}
A.~Yu. Olshanskii.
\newblock Conditional identities in finite groups.
\newblock {\em Sibirsk. Mat. Zh.} 15:1409--1413, 1974, 1409--1413.
\newblock [Russian; English translation in Siberian Math. J. 15:1000-1003, 1975].

\bibitem{qtheor}
J.~Rhodes and B.~Steinberg.
\newblock {\em The {$q$}-theory of finite semigroups}.
\newblock Springer Monographs in Mathematics. Springer, New York, 2009.

\bibitem{Saliola}
F.~V. Saliola.
\newblock The quiver of the semigroup algebra of a left regular band.
\newblock Internat. J. Algebra Comput., 17(8):1593--1610, 2007.

\bibitem{Saliolahyperplane}
F.~V. Saliola.
\newblock The face semigroup algebra of a hyperplane arrangement.
\newblock  Canad. J. Math., 61(4):904--929, 2009.

\bibitem{SaliolaThomas}
F. Saliola and H. Thomas,
\newblock Oriented interval greedoids.
\newblock Discrete Comput. Geom., 47(1):64--105, 2012.

\bibitem{Sapir3}
M.~V. Sapir.
\newblock On the quasivarieties generated by finite semigroups.
\newblock {\em Semigroup Forum} 20(1):73–-88, 1980.

\bibitem{SapirThesis}
M.~V. Sapir.
\newblock Quasivarieties of Semigroups.
\newblock Candidate Thesis, Sverdlovsk, University of the Urals, 1981.

\bibitem{Sapirquasi}
M.~V. Sapir.
\newblock Varieties with a countable number of subquasivarieties.
\newblock {\em Sibirsk. Mat. Zh.}, 25(3):148--163, 1984.

\bibitem{Shafaat}
A. Shafaat.
\newblock On implicational completeness.
\newblock {\em Canad. J. Math.} 26:761–-768, 1974.

\bibitem{Volk}
M.~V. Volkov.
\newblock The finite basis problem for finite semigroups.
\newblock {\em Sci. Math. Jpn.} 53(1):171--1999, 2001.
\end{thebibliography}
\def\malce{\mathbin{\hbox{$\bigcirc$\rlap{\kern-7.75pt\raise0,50pt\hbox{${\tt
  m}$}}}}}\def\cprime{$'$} \def\cprime{$'$} \def\cprime{$'$} \def\cprime{$'$}
  \def\cprime{$'$} \def\cprime{$'$} \def\cprime{$'$} \def\cprime{$'$}
  \def\cprime{$'$}

\end{document}